\newcommand{\balg}{\begin{algorithm}}
\newcommand{\ealg}{\end{algorithm}}
\newcommand{\br}{\begin{remark}}
\newcommand{\er}{\end{remark}}
\newcommand{\bex}{\begin{example}}
\newcommand{\eex}{\end{example}}
\newtheorem{theorem}{Theorem}[section]
\theoremstyle{definition}
\newtheorem{example}[theorem]{Example}
\newtheorem{algorithm}[theorem]{Algorithm}
\newtheorem{remark}[theorem]{Remark}
\numberwithin{equation}{section}
\begin{document}

\title{Completely positive tensor decomposition}



\author{Jinyan Fan}
\address{Department of Mathematics, and MOE-LSC, Shanghai Jiao Tong University,
Shanghai 200240, P.R. China}
\email{jyfan@sjtu.edu.cn}

%

\author{Anwa Zhou}
\address{Department of Mathematics, Shanghai Jiao Tong University,
Shanghai 200240, P.R. China}
\email{congcongyan@sjtu.edu.cn}
\thanks{}

\thanks{The first author is partially supported by NSFC 11171217.}

\subjclass[2000]{Primary 15A18, 15A69, 90C22}

\date{}

\dedicatory{}

\begin{abstract}
A symmetric  tensor, which has a symmetric nonnegative decomposition,
is called a completely positive tensor.
We consider the completely positive tensor decomposition problem.
A semidefinite algorithm is presented for checking whether a  symmetric tensor is completely positive.
If it is not completely positive, a certificate for it can be obtained;
if it is completely positive, a nonnegative decomposition can be obtained.
\end{abstract}

\keywords{completely positive tensor, nonnegative decomposition, $E$-truncated $K$-moment problem,
semidefinite program}

\maketitle

\section{Introduction}

Let $m$ and $n$ be positive integers. An  $m$th order $n$-dimensional tensor $\mathcal{A}$ is
an array indexed by integer tuples $(i_1, \ldots , i_m)$ with $1 \leq i_1, \ldots, i_m \leq n$, i.e.,
\[
\mathcal{A}=(\mathcal{A}_{i_1,\ldots,i_m})_{1 \leq i_1, \ldots , i_m \leq n}.
\]
Let $\mathrm{T}^m(\mathrm{R}^{n})$ be the set of all such real tensors.
A tensor $\mathcal{A} \in \mathrm{T}^m(\mathrm{R}^{n})$
is symmetric if each entry $\mathcal{A}_{i_1,\ldots,i_m}$ is invariant with respect to all permutations of $(i_1, \ldots , i_m)$.
Let $\mathrm{S}^m(\mathrm{R}^{n})$ be the set of all symmetric tensors in $\mathrm{T}^m(\mathrm{R}^n)$.
For a vector $v \in\mathrm{R}^n$, we denote $v^{\otimes m}$ the $m$th order $n$-dimensional symmetric
 outer product tensor such that
\[
(v^{\otimes m})_{i_1,\ldots,i_m}= v_{i_1}\cdots v_{i_m}.
\]
$v^{\otimes m} $ is a rank-1 symmetric tensor.
Comon et al. \cite{Comon2008} showed that any real symmetric tensor is a linear combination of rank-1 symmetric tensors.
Interested readers are referred to \cite{Comon2002,Cui,Hu, Kolda2009,Kolda2011,Nie2014a, Nie2014b} for numerical methods for computing real eigenvalue of symmetric tensors, tensor decompositions
as well as applications.

Let $\mathrm{R}^n_+$ be the nonnegative orthant.
A symmetric tensor $\mathcal{A}\in \mathrm{S}^m(\mathrm{R}^{n})$ is completely positive,
if there exit nonnegative vectors $v^1,\ldots,v^r\in \mathrm{R}^n_+$ such that
\begin{equation}\label{cptensorf}
  \mathcal{A} = \sum^r_{k=1} (v^k)^{\otimes m},
\end{equation}
where $r$ is called the length of the decomposition \eqref{cptensorf} (cf. \cite{Qi}).
The smallest $r$ in the above is called the CP-rank of $\mathcal{A}$.
If $\mathcal{A}$ is completely positive, \eqref{cptensorf} is called a nonnegative (or completely positive) decomposition of $\mathcal{A}$.
The completely positive tensor is an extension of the completely positive matrix \cite{BermanN,Burer,Xu,ZhouFan13,ZhouFan14}.

For $\mathcal{B}\in \mathrm{S}^m(\mathrm{R}^{n})$, we define
 \[
 \mathcal{B}x^m := \sum_{1\leq i_1,\ldots,i_m\leq n} \mathcal{B}_{i_1,\ldots,i_m} x_{i_1}\cdots x_{i_m}.
 \]
If
$$
\mathcal{B}x^m \geq 0, \quad \forall x \in \mathrm{R}^n_+,
$$
we call $\mathcal B$ a copositive tensor (cf. \cite{Qi13}).
The copositive tensor is an extension of the copositive matrix.
Obviously, both symmetric nonnegative tensors and positive semidefinite tensors are copositive tensors.

For $\mathcal{A}, \mathcal{B}\in  \mathrm{S}^m(\mathrm{R}^{n})$,
the inner product of $\mathcal{A}$ and $\mathcal{B}$ is defined as
$$
\mathcal{A}\bullet \mathcal{B}=\sum_{1\leq i_1,\ldots, i_m\leq n}
\mathcal{A}_{i_1,\ldots, i_m} \mathcal{B}_{i_1,\ldots, i_m}.
$$
For a cone $ C\subseteq \mathrm{S}^m(\mathrm{R}^{n})$, the dual cone of $ C$ is defined as
$$
 C^*:= \{\mathcal{G} \in \mathrm{S}^m(\mathrm{R}^{n}) : \mathcal{A}\bullet \mathcal{G} \geq 0 \; \text{for all} \; \mathcal{A} \in  C\}.
$$
Denote by $CP_{m,n}$ and $COP_{m,n}$ the sets of $m$th order $n$-dimensional completely positive tensors and copositive tensors, respectively.
Both $CP_{m,n}$ and $COP_{m,n}$ are proper cones,
i.e., they are closed pointed convex cones with non-empty interiors \cite{Pena14}.
Moreover, they are dual to each other \cite{Qi}.

Completely positive tensor and decomposition have wide applications in statistics, computer vision, exploratory multiway data analysis and blind source separation \cite{Cichocki, Shashua}.
Qi et al. \cite{Qi} showed that a strongly symmetric hierarchically dominated nonnegative tensor is completely positive and presented a hierarchical elimination algorithm for checking this.
Kolda \cite{Kolda2014} formulated the completely positive tensor decomposition problem as a nonnegative constrained least squares problem by assuming the length of the decomposition is known, then solved it by SNOPT \cite{Gill2005, Gill2008}.

A natural and interesting problem is: how to check whether a general symmetric tensor is completely positive or not?
If it is not completely positive, how can we get a certificate for it?
If it is completely positive, how can we get a nonnegative decomposition for it?
As we know, checking a completely positive matrix is NP-hard \cite{Dickinson11}.
The problems of checking a completely positive tensor and giving a completely positive tensor decomposition are more complicated. We solve them in this paper.

The paper is organized as follows. In Section 2, we characterize when a symmetric tensor is completely positive.
In Section 3, we formulate the completely positive tensor decomposition problem as an $E$-truncated $K$-moment problem and present a semidefinite algorithm for it.
The convergence properties of the algorithm are also studied.
Some numerical examples are given in Section 4.
Finally, we conclude the paper with some discussions in section 5.

\section{Characterization as moments}
In this section, we characterize the completely positive tensor as a truncated moment sequence,
and give some necessary and sufficient conditions for a symmetric tensor to be completely positive.

As we know, a symmetric matrix can be identified by a vector that consists of its upper triangular entries. Similarly, a symmetric tensor $\mathcal{A}\in  \mathrm{S}^m(\mathrm{R}^{n})$ can also be identified by a vector that
consists of its upper triangular entries, i.e. the entries
$$
\mathcal{A}_{i_1,\ldots,i_m}\ \mbox{with}\ 1 \leq i_1\leq \ldots\leq i_m \leq n.
$$

Let $\mathrm{N}$ be the set of nonnegative integers.
For $\alpha = (\alpha_1,\cdots, \alpha_n) \in \mathrm{N}^n$,
denote $|\alpha| := \alpha_1+\cdots+\alpha_n$.
Let
\begin{equation}\label{AE} E := \{\alpha
\in \mathrm{N}^n:\, |\alpha|=m\}.
\end{equation}
Then, each index $(i_1,\ldots i_m)$ corresponds to a vector
$$
e_{i_1}+e_{i_2}+\cdots+e_{i_m}\in E,
$$
where $e_i$ is the $i$-th unit vector in $\mathrm{R}^n$.
Hence, the identifying vector of $\mathcal{A}\in \mathrm{S}^m(\mathrm{R}^{n})$ can also be written as
$$
\mathbf{a} =(\mathbf{a}_{\alpha})_{\alpha\in E} \in \mathrm{R}^E,
$$
where $\mathrm{R}^{E}$ denotes the
space of real vectors indexed by $\alpha \in E$.
We call $\mathbf{a}$ an $E$-truncated moment sequence ($E$-tms).

Let
\begin{equation} \label{KE}
K=\{x\in \mathrm{R}^n :\,
x^T x -1= 0, x_1 \geq0, x_2 \geq 0, \cdots, x_n\geq 0\}.
\end{equation}
Note that every nonnegative vector is a multiple
of a vector in  $K$.
So, by (\ref{cptensorf}),
$\mathcal{A}\in CP_{m,n}$ if and only if
there exist   $\rho_1,\cdots,\rho_r >0$
and $u^1,\cdots,u^r \in K$ such that
 \begin{equation}\label{Ecompletely positivee}
\mathcal{A}=\rho_1 (u^1)^{\otimes m} +\cdots +\rho_r (u^r)^{\otimes m}.
 \end{equation}

 The $E$-truncated $K$-moment problem
($E$-T$K$MP) studies whether or not a given {$E$-tms $\mathbf{a}$ admits a
$K$-measure} $\mu$, i.e., a nonnegative Borel measure $\mu$ supported
in $K$ such that
\[
\mathbf{a}_{\alpha}= \int_K x^{\alpha} d \mu, \quad
\forall\, \alpha \in  E,
\]
where $x^{\alpha} := x^{\alpha_1}_1 \cdots x^{\alpha_n}_n$.
A measure $\mu$ satisfying the above is called a
$K$-representing measure for $\mathbf{a}$. A measure is called finitely
atomic if its support is a finite set, and is called $r$-atomic if its
support consists of at most $r$ distinct points.
We refer to \cite{Nie} for representing measures of truncated moment sequences.

Hence, by \eqref{cptensorf}, a tensor $\mathcal{A}\in \mathrm{S}^m(\mathrm{R}^{n})$, with the identifying vector
$\mathbf{a} \in \mathrm{R}^{E}$,  is completely positive if and only if $\mathbf{a}$ admits
an $r$-atomic $K$-measure,  i.e.,
\begin{equation}
\label{Ecompletely positiveee} \mathbf{a}=\rho_1 [u^1]_{E} +\cdots +\rho_r [u^r]_{E},
\end{equation}
where each $\rho_i>0$, $u^i \in K$  and
\[
[u]_{E} :=(u^{\alpha})_{\alpha \in
{E}}.
\]
In other words,
checking a completely positive tensor is equivalent to an $E$-T$K$MP with
$E$ and $K$ given in (\ref{AE}) and (\ref{KE}) respectively.
Denote
\[
Q=\{\mathbf{a}\in \mathrm{R}^E:  \mathbf{a}\ \mbox{admits a}\ K\mbox{-measure} \}.
\]
Then, $Q$ is the cone of completely positive tensor. So, we have
\begin{align}\label{completely positiveE1}
\mathcal{A}\in CP_{m,n} \quad \mbox{if and only if}\quad  \mathbf{a} \in  Q.
\end{align}

Denote
\[
\mathrm{R}[x]_{E}:= \mbox{span}\{x^{\alpha}: \alpha\in E\}.
\]
For a polynomial $p\in \mathrm{R}[x]_{E}$, the notion $p|_K\geq 0$ (resp., $p|_K> 0$) means
$p\geq 0$ (resp., $p> 0$) on $K$. The dual cone of $Q$ is
$$
P=\{p\in \mathrm{R}[x]_{E}: p|_K\geq 0\},
$$
which is $COP_{m,n}$, the cone of copositive tensors (cf. \cite{Laurent2009, Nie1}).
We say $\mathrm{R}[x]_{E}$ is {$K$-full} if there exists a polynomial $p \in \mathrm{R}[x]_{E}$ such
that $p|_{K} >0$ (cf. \cite{Nie4}).
In fact, if we choose $p(x)=\sum_{i=1}^n x_i^m\in \mathrm{R}[x]_{E}$,
then $p|_{K} >0$.
So, $\mathrm{R}[x]_{E}$ is $K$-full.

An $E$-tms $\mathbf{a} \in \mathrm{R}^{E}$ defines a Riesz function $F_{\mathbf{a}}$ acting on
$\mathrm{R}[x]_{E}$ as
 \begin{equation}\label{La} F_{\mathbf{a}}
(\sum_{\alpha\in E}p_{\alpha} x^{\alpha}):=
\sum_{\alpha\in E}p_{\alpha} \mathbf{a}_{\alpha}.
 \end{equation}
 We also denote $\langle p,\mathbf{a} \rangle:= F_{\mathbf{a}}(p)$ for convenience.
Let
$$
\mathrm{N}_d^n := \{ \alpha \in \mathrm{N}^n: \, |\alpha| \leq d\}
\quad \mbox{and} \quad
\mathrm{R}[x]_{d}:=\mbox{span}\{x^{\alpha}: \alpha\in \mathrm{N}^n_{d}\}.
$$
For $s \in \mathrm{R}^{\mathrm{N}^n_{2k}}$ and $q
\in \mathrm{R}[x]_{2k}$, the $k$-th localizing matrix  of
$q$ generated by $s$ is the symmetric
matrix $L^{(k)}_q (s)$ satisfying
 \begin{equation}\label{Lzqp2}
F_s (q p^2)= \operatorname{vec}(p)^T (L^{(k)}_q (s))\operatorname{vec}(p), \quad \forall p\in
\mathrm{R}[x]_{k-\lceil deg(q)/2 \rceil},
 \end{equation}
where $\operatorname{vec}(p)$ denotes the coefficient vector of $p$ in the
graded lexicographical ordering, and $\lceil t\rceil$
denotes the smallest integer that is not smaller than $t$.
In particular, when $q=1$, $L^{(k)}_1 (s)$ is called a  $k$-th
order moment matrix  and denoted as $M_k(s)$.
In fact, we have
$$
L^{(k)}_q (s) = (\sum_{\alpha} q_{\alpha} s_{\alpha+\beta+\gamma})_{\beta,\gamma\in \mathrm{N}^n_{k-\lceil deg(q)/2 \rceil}},
$$
and
$$
M_k(s) = L^{(k)}_1 (s) = (s_{\beta+\gamma})_{\beta,\gamma \in \mathrm{N}^n_{k}}.
$$
We refer to \cite{Lasserre2001,Lasserre2008, Lasserre2009, Laurent2009, Nie} for more details about localizing matrices
and moment matrices.

Denote the polynomials:
\[
h(x):= x^T x -1, \, g_0(x):=1,\, g_1(x): =x_1, \, \ldots,  g_{n}(x): = x_n.
\]
Note that $K$ given in (\ref{KE}) is nonempty compact.
 It can also be described equivalently as
\begin{equation}\label{K2}
K=\{x\in \mathrm{R}^n: \ h(x)= 0, g(x) \geq 0\},
\end{equation}
where $g(x)=(g_0(x), g_1(x),\cdots,g_{n}(x))$.
For example, when $n = 2$ and $k = 2$,
the second order localizing matrices of the above polynomials are:
\begin{align*}
 M_2(s) :=  L^{(2)}_{1} (s)= \left(\begin{array}{cccccc}
  s_{0,0} &s_{1,0} &s_{0,1} &s_{2,0} &s_{1,1} &s_{0,2}\\
  s_{1,0} &s_{2,0} &s_{1,1} &s_{3,0} &s_{2,1} &s_{1,2}\\
  s_{0,1} &s_{1,1} &s_{0,2} &s_{2,1} &s_{1,2} &s_{0,3}\\
  s_{2,0} &s_{3,0} &s_{2,1} &s_{4,0} &s_{3,1} &s_{2,2}\\
  s_{1,1} &s_{2,1} &s_{1,2} &s_{3,1} &s_{2,2} &s_{1,3}\\
  s_{0,2} &s_{1,2} &s_{0,3} &s_{2,2} &s_{1,3} &s_{0,4}
\end{array}\right),
 \end{align*}
  \begin{align*}
 L^{(2)}_{x_1} (s)&=\left(\begin{array}{ccc}
  s_{1,0} &s_{2,0} &s_{1,1}\\
  s_{2,0} &s_{3,0} &s_{2,1} \\
  s_{1,1} &s_{2,1} &s_{1,2}
\end{array}\right), \quad
 L^{(2)}_{x_2} (s)= \left(\begin{array}{ccc}
  s_{0,1} &s_{1,1} &s_{0,2}\\
  s_{1,1} &s_{2,1} &s_{1,2} \\
  s_{0,2} &s_{1,2} &s_{0,3}
\end{array}\right),
\\
 L^{(2)}_{x_1^2+x_2^2-1} (s)&= \left(\begin{array}{ccc}
 s_{2,0}+s_{0,2}-s_{0,0} &s_{3,0}+s_{1,2}-s_{1,0} &s_{2,1}+s_{0,3}-s_{0,1}\\
 s_{3,0}+s_{1,2}-s_{1,0} &s_{4,0}+s_{2,2}-s_{2,0} &s_{3,1}+s_{1,3}-s_{1,1} \\
 s_{2,1}+s_{0,3}-s_{0,1} &s_{3,1}+s_{1,3}-s_{1,1} &s_{2,2}+s_{0,4}-s_{0,2}
\end{array}\right).
\end{align*}

As shown in \cite{Nie}, a necessary condition for $s \in \mathrm{R}^{\mathrm{N}^n_{2k}}$
to admit a $K$-measure is
\begin{equation}
\label{SDPC}
  L^{(k)}_{h} (s) = 0, \quad \mbox{and}\quad L^{(k)}_{g_j} (s) \succeq
0, \quad j=0,1,\cdots,n.
\end{equation}
If, in addition to (\ref{SDPC}), $s$ satisfies the {rank condition}
\begin{equation}
\label{RC}
\text{rank} (M_{k-1}(s)) =\text{rank} (M_{k} (s)),
\end{equation}
then $s$ admits a unique
$K$-measure, which is $\text{rank} M_k(s)$-atomic
(cf. Curto and Fialkow \cite{CurtoF}).
We say that $s$ is {flat} if both (\ref{SDPC}) and (\ref{RC}) are satisfied.

Given two tms' $y \in \mathrm{R}^{\mathrm{N}^n_{d}}$ and $z \in
\mathrm{R}^{\mathrm{N}^n_{e}}$, we say $z$ is an  extension  of $y$, if $d\leq e$ and
$y_{\alpha} = z_{\alpha}$ for all $\alpha \in \mathrm{N}^n_{d}$. We denote
by $z|_{E}$ the subvector of $z$, whose entries are indexed by
$\alpha \in E$. For convenience, we denote by $z|_{d}$
the subvector $z |_{\mathrm{N}^n_{d}}$.
If $z$ is flat and extends $y$, we say $z$ is a {flat extension} of $y$.
It is shown in \cite{Nie} that an $E$-tms $\mathbf{a} \in \mathrm{R}^{E}$ admits a $K$-measure
if and only if it is extendable to a flat tms $z \in \mathrm{R}^{\mathrm{N}^n_{2k}}$
for some $k$.
So, by (\ref{completely positiveE1}),
\begin{align}\label{completely positiveE4}
\mathcal{A}\in CP_{m,n} \quad \mbox{if and only if}\quad  \mathbf{a} \ \mbox{has a flat extension}.
\end{align}
Therefore,  checking whether a symmetric tensor $\mathcal{A}\in \mathrm{S}^m(\mathrm{R}^{n})$ is completely positive is equivalent to checking whether its identifying vector $\mathbf{a} \in \mathrm{R}^{E}$ has a flat extension.

For example, consider the symmetric tensor $\mathcal A\in \mathrm{S}^3(\mathrm{R}^{3})$ given as
\[
\mathcal A(:,:,1) =\left(
                     \begin{array}{ccc}
                       2  &   1  &   1 \\
	                   1  &   1  &   0 \\
	                   1  &   0  &   1 \\
                     \end{array}
                   \right),
\mathcal A(:,:,2) =\left(
                     \begin{array}{ccc}
                       1   &  1  &   0 \\
	                   1   &  2  &   0 \\
	                   0   &  0  &   0 \\
                     \end{array}
                   \right),
\mathcal A(:,:,3) =\left(
                     \begin{array}{ccc}
                       1   &  0 &    1 \\
	                   0   &  0 &    0 \\
	                   1   &  0 &    1 \\
                     \end{array}
                   \right).
\]
The index set of $\mathcal{A}$ is  $E := \{\alpha \in \mathrm{N}^3:\, |\alpha|=3\}$ and the identifying vector of $\mathcal A$ is
\[\mathbf{a} =(2, 1, 1, 1, 0, 1, 2, 0, 0, 1)^T\in \mathrm{R}^{E}.\]
For $n=3$ and $k=2$, consider the tms
\begin{align*}
 \tilde{a}=(& 6.6569,    4.0000,    3.0000,    2.0000,    2.8284,    1.4142,    1.4142,    2.4142,    0.0000,    1.4142, \\
                  & 2.0000,    1.0000,    1.0000,    1.0000,         0.0000,    1.0000, 2.0000,         0.0000,         0.0000, 1.0000, \\
                  & 1.4142,   0.7071,    0.7071,    0.7071,   0.0000,    0.7071,    0.7071,   0.0000,    0.0000,   0.7071, \\
                  & 1.7071,   0.0000,   0.0000, 0.0000,  0.7071)^T\in \mathrm{R}^{\mathrm{N}^3_{2k}}.
\end{align*}
We compute the second order localizing matrices of $h(x)$ and $g(x)$ generated by $\tilde a$
and obtain:
\begin{align*}
 L^{(2)}_{x_1^2+x_2^2+x_3^2-1} (\tilde{a})= \left(\begin{array}{cccc}
  0.0000 &0.0000 &0.0000 &0.0000\\
  0.0000 &0.0000 &0.0000 &0.0000 \\
  0.0000 &0.0000 &0.0000 &0.0000 \\
  0.0000 &0.0000 &0.0000 &0.0000
\end{array}\right)=0,
\\
 L^{(2)}_{x_1} (\tilde{a})=\left(\begin{array}{cccc}
  4.0000 & 2.8284 & 1.4142 & 1.4142\\
  2.8284 & 2.0000 & 1.0000 & 1.0000 \\
  1.4142 & 1.0000 & 1.0000 & 0.0000 \\
  1.4142 & 1.0000 & 0.0000 & 1.0000
\end{array}\right)\succeq 0,
\\
 L^{(2)}_{x_2} (\tilde{a})=\left(\begin{array}{cccc}
  3.0000 &1.4142 &2.4142 &0.0000\\
  1.4142 &1.0000 &1.0000 &0.0000 \\
  2.4142 &1.0000 &2.0000 &0.0000 \\
  0.0000 &0.0000 &0.0000 &0.0000
\end{array}\right)\succeq 0,
\\
L^{(2)}_{x_3} (\tilde{a})=\left(\begin{array}{cccc}
  2.0000 &1.4142 &0.0000 &1.4142\\
  1.4142 &1.0000 &0.0000 &1.0000 \\
  0.0000 &0.0000 &0.0000 &0.0000 \\
  1.4142 &1.0000 &0.0000 &1.0000
\end{array}\right)\succeq 0,
\end{align*}
 \begin{align*}
&L^{(2)}_{1} (\tilde{a})=M_2(\tilde{a})\\
&={\scriptsize\left(
  \begin{array}{cccccccccc}
    6.6569 &   4.0000  &  3.0000  &  2.0000 &   2.8284  &  1.4142  &  1.4142  &  2.4142  &  0.0000  &  1.4142\\
    4.0000  &  2.8284   & 1.4142  &  1.4142  &  2.0000 &   1.0000  &  1.0000  &  1.0000  &  0.0000  &  1.0000\\
    3.0000  &  1.4142  &  2.4142  &  0.0000  &  1.0000 &   1.0000  &  0.0000  &  2.0000  &  0.0000  &  0.0000\\
    2.0000  &  1.4142  &  0.0000   & 1.4142  &  1.0000 &   0.0000  &  1.0000 &   0.0000  &  0.0000  &  1.0000\\
    2.8284  &  2.0000  &  1.0000   & 1.0000  &  1.4142  &  0.7071  &  0.7071  &  0.7071  &  0.0000   & 0.7071\\
    1.4142  &  1.0000  &  1.0000   & 0.0000  &  0.7071  &  0.7071  &  0.0000  &  0.7071  &  0.0000 &   0.0000\\
    1.4142  &  1.0000  &  0.0000   & 1.0000  &  0.7071  &  0.0000  &  0.7071  &  0.0000 &   0.0000 &   0.7071\\
    2.4142  &  1.0000  &  2.0000   & 0.0000  &  0.7071  &  0.7071  &  0.0000 &   1.7071 &   0.0000 &   0.0000\\
    0.0000  &  0.0000  &  0.0000   & 0.0000 &   0.0000  &  0.0000  &  0.0000  &  0.0000  &  0.0000  &  0.0000\\
    1.4142  &  1.0000   & 0.0000   & 1.0000  &  0.7071  &  0.0000  &  0.7071 &   0.0000  &  0.0000  &  0.7071
  \end{array}
\right)}\\
& \succeq 0.
\end{align*}
Since the first order moment matrix is
$$
M_1 (\tilde{a})=\left(\begin{array}{cccc}
  6.6569 &   4.0000  &  3.0000  &  2.0000\\
  4.0000  &  2.8284   & 1.4142  &  1.4142 \\
  3.0000  &  1.4142  &  2.4142  &  0.0000 \\
  2.0000  &  1.4142  &  0.0000   & 1.4142
\end{array}\right),
$$
we have
$$
\operatorname{rank}(M_1 (\tilde{a}))=\operatorname{rank}(M_2(\tilde{a}))=3.
$$
Thus, by \eqref{SDPC} and \eqref{RC},  $\tilde{a}$ is fat.
It is easy to check that $\tilde{a}|_{E}=\mathbf{a}$,
which implies that $\tilde{a}$ is a flat extension of $\mathbf{a}$.
So, by \eqref{completely positiveE1}, $\mathcal A$ is completely positive.

\section{A semidefinite algorithm}
In this section, we present a semidefinite algorithm for
checking whether a symmetric tensor is completely positive or not.
If it is not completely positive, we can give a certificate for it;
if it is completely positive, we can give a nonnegative decomposition of it.

As analysed  in Section 2, we transform the problem of checking the complete positivity of a symmetric tensor to checking the existence of a flat extension of its identifying vector.
Let $d > m$ be an even integer. Choose a polynomial
$F \in \mathrm{R}[x]_d$ :
\[
F(x)  = \sum\limits_{\alpha\in \mathrm{N}^n_{d}}F_{\alpha} x^{\alpha}.
\]
Consider the linear optimization problem
\begin{equation}
\label{MPP}
  \begin{array}{cl}
  \min\limits_{z}  & \langle F, z \rangle\\\
  \mbox{s.t.} & z|_{E} = \mathbf{a}, z \in \Upsilon_d,
 \end{array}
\end{equation}
where
$$
\Upsilon_d=\{z\in \mathrm{R}^{\mathrm{N}_d^n}:  z\ \mbox{admits a}\ K\mbox{-measure} \}.
$$
Since $K$ is a compact set and $\mathrm{R}[x]_{E}$ is $K$-full,
  the feasible set of (\ref{MPP}) is compact convex. So,  (\ref{MPP}) has a minimizer for all $F$.
Usually, we choose $F \in \Sigma_{n,d}$, where $\Sigma_{n,d}$ is
the set of all sum of squares polynomials in $n$ variables with degree $d$.
As $\Upsilon_d$ is difficult to describe, by \eqref{SDPC}, we relax it by the cone
\begin{equation}
\label{EIk}
  \Gamma_{k} (h,g) := \left\{z \in \mathrm{R}^{\mathrm{N}^n_{2k}}
  :  L^{(k)}_{h} (z) = 0, L^{(k)}_{g_j} (z) \succeq 0,
  j=0,1,\ldots,n
  \right\},
\end{equation}
with $k \geq d/2$ an integer. The {$k$-th order semidefinite
relaxation} of (\ref{MPP}) is
\begin{equation}
\label{SDPR}
   \begin{array}{cl}
  \min\limits_{z}  & \langle F, z \rangle\\
  \mbox{s.t.} & z|_{E} = \mathbf{a}, z \in \Gamma_k(h,g).
 \end{array}
\end{equation}
Based on solving the hierarchy of (\ref{SDPR}),
we present a semidefinite algorithm for checking completely positive tensors as follows.

\balg\label{Algorithm}  A semidefinite algorithm for checking completely positive tensors.

\textbf{Step 0.} Choose a generic $F \in \Sigma_{n,d}$, and let $k := d/2$.

\textbf{Step 1.} Solve (\ref{SDPR}). If (\ref{SDPR}) is infeasible, then
$\mathbf{a}$ doesn't admit a $K$-measure, and stop.
 Otherwise, compute a minimizer $z^{*,k}$. Let $t :=1$.

\textbf{Step 2.} Let $w := z^{*,k}|_{2t}$. If the rank condition (\ref{RC})
is not satisfied, go to Step 4.

\textbf{Step 3.} Compute the finitely atomic measure $\mu$ admitted by $w$:
$$\mu = \rho_1 \delta(u^1) + \cdots + \rho_r \delta(u^r),$$
where $\rho_i > 0$, $u^i \in K$, $r= \text{rank} M_t (w)$, and $\delta(u^i)$ is the Dirac
measure supported on the point $u^i$ $(i=1,\cdots,r)$. Stop.

\textbf{Step 4.} If $t < k$, set $t := t+1$ and go to Step 2; otherwise, set
$k := k+1$ and go to Step 1. \ealg

Denote $[x]_{d} := (x^{\alpha})_{\alpha \in \mathrm{N}^n_{d}}$.
We choose $F = [x]^T_{d/2} J^T J [x]_{d/2}$ in (\ref{SDPR}), where $J$ is a random square matrix
obeying Gaussian distribution. We check the rank condition
(\ref{RC}) numerically with singular value decomposition \cite{Golub}. The rank of a matrix is evaluated as the
number of its singular values that are greater than or equal to
$10^{-6}$. We use the method given in \cite{HenrionJ} to get a $r$-atomic
$K$-measure for $w$.

\begin{theorem}
\label{Algorithmresults}
Algorithm \ref{Algorithm} has the following properties:
\begin{itemize}
\item [(1)] If (\ref{SDPR}) is infeasible for some $k$,
then $\mathbf{a}$ admits no $K$-measures and the
corresponding tensor $\mathcal{A}$ is not completely positive.

\item [(2)] If the tensor $\mathcal{A}$ is not completely positive,
then (\ref{SDPR}) is infeasible for all $k$ big enough.

\item [(3)] If the tensor $\mathcal{A}$ is completely positive, then
for almost all generated $F$,
we can asymptotically get a flat extension of $\mathbf{a}$
by solving the hierarchy of (\ref{SDPR}).

\end{itemize}
\end{theorem}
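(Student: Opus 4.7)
Each of the three items reduces to combining the characterization $\mathcal{A}\in CP_{m,n}\Leftrightarrow \mathbf{a}\in Q$ from (\ref{completely positiveE1}) with a standard moment-theoretic tool: item (1) is a direct contrapositive, item (2) rests on separation together with Putinar's Positivstellensatz, and item (3) invokes the convergence theory for $E$-truncated $K$-moment problems from \cite{Nie}.

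For (1), if $\mathbf{a}$ admitted a $K$-measure $\mu$, the tms $z\in\mathrm{R}^{\mathrm{N}^n_{2k}}$ defined by $z_{\alpha}:=\int_K x^{\alpha}\,d\mu$ would satisfy $z|_E=\mathbf{a}$, and the identity (\ref{Lzqp2}) would force $L^{(k)}_h(z)=0$ and $L^{(k)}_{g_j}(z)\succeq 0$ for all $j$; this would make $z$ feasible for (\ref{SDPR}), contradicting the hypothesis. Hence $\mathbf{a}$ admits no $K$-measure and (\ref{completely positiveE1}) gives $\mathcal{A}\notin CP_{m,n}$.

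For (2), closedness of $Q$ and the assumption $\mathbf{a}\notin Q$ yield, by Hahn-Banach separation, a polynomial $p\in Q^*\cap\mathrm{R}[x]_E=P$ with $\langle p,\mathbf{a}\rangle<0$. Because $q(x):=\sum_{i=1}^n x_i^m\in\mathrm{R}[x]_E$ is strictly positive on $K$ (any $x\in K$ has $x\ge 0$ and $\|x\|=1$, so some $x_i>0$), the perturbation $p_{\epsilon}:=p+\epsilon q$ satisfies $p_{\epsilon}|_K>0$ and $\langle p_{\epsilon},\mathbf{a}\rangle<0$ for all sufficiently small $\epsilon>0$. The Archimedean condition holds since $1-\|x\|^2=-h$ lies in the quadratic module generated by the $g_j$ and the ideal $(h)$, so Putinar's theorem delivers a representation
\[
p_{\epsilon}=\sigma_0+\sum_{j=0}^{n}\sigma_j g_j+\phi h,
\]
with SOS polynomials $\sigma_j$ and $\phi\in\mathrm{R}[x]$ of some total degree $2N$. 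For any $k\ge N$ and any feasible $z$ of (\ref{SDPR}), the defining constraints of $\Gamma_k(h,g)$ combined with (\ref{Lzqp2}) force $\langle\sigma_0,z\rangle\ge 0$, $\langle\sigma_j g_j,z\rangle\ge 0$, and $\langle\phi h,z\rangle=0$, hence $\langle p_{\epsilon},z\rangle\ge 0$. But $p_{\epsilon}\in\mathrm{R}[x]_E$ and $z|_E=\mathbf{a}$ give $\langle p_{\epsilon},z\rangle=\langle p_{\epsilon},\mathbf{a}\rangle<0$, a contradiction. So (\ref{SDPR}) is infeasible for all $k\ge N$.

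For (3), I would apply the convergence theory for $E$-truncated $K$-moment problems from \cite{Nie}. The required hypotheses all hold: $K$ is compact, $\mathrm{R}[x]_E$ is $K$-full (with witness $\sum x_i^m$ already exhibited), and $F\in\Sigma_{n,d}$ is drawn generically. Under these conditions, the results of \cite{Nie} imply that (\ref{MPP}) has a unique minimizer realized by a finitely atomic $K$-measure, that the sequence $\{z^{*,k}\}$ converges (along subsequences) to this minimizer, and that for almost all $F$ the truncation $w=z^{*,k}|_{2t}$ satisfies the flat extension condition (\ref{RC}) at some inner level $t\le k$. An application of Curto-Fialkow then extracts an $r$-atomic $K$-measure from $w$, reconstructing the nonnegative decomposition via (\ref{Ecompletely positiveee}). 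The main obstacle here is precisely the ``almost all $F$'' clause: showing that the Zariski-open set of objectives $F$ for which the hierarchy admits a flat truncation has full measure is the substantive content of the convergence theorem in \cite{Nie}, which I would carefully invoke in place of reproving.
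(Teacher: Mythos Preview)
Your proposal is correct and aligns with the paper's approach: the paper's own proof simply notes that $\mathrm{R}[x]_E$ is $K$-full and defers all three items to \cite[Section~5]{Nie}. You have unpacked the arguments for (1) and (2) explicitly---the contrapositive via moment integrals and the separation-plus-Putinar route---which is precisely the machinery underlying the cited reference, while your treatment of (3) invokes \cite{Nie} just as the paper does.
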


\begin{proof}
Since $\mathrm{R}[x]_{E}$ is $K$-full for the $E$ and $K$ given in \eqref{AE} and \eqref{KE} respectively,
we can deduce the convergence results from \cite[Section 5]{Nie}.
\end{proof}

If the tensor $\mathcal{A}$ is completely positive,
under some general conditions, which is almost necessary and sufficient,
we can get a flat extension of $\mathbf{a}$ by solving the hierarchy of (\ref{SDPR}),
within finitely many steps (cf.~\cite{Nie,Nie3}). This always happens in our
numerical experiments.
After getting a flat extension of $\mathbf{a}$, we can get a $r$-atomic
$K$-measure for $\mathbf{a}$, which then produces a nonnegative decomposition of $\mathcal{A}$.

\section{Numerical experiments}

In this section, we present some numerical experiments
for checking whether a symmetric tensor is completely positive by  Algorithm \ref{Algorithm}.
The nonnegative decomposition of the tensor is also given if it is completely positive.
We use softwares GloptiPoly~3 \cite{HenrionJJ} and SeDuMi \cite{Sturm}
to solve the semidefinite relaxation problems (\ref{SDPR}).

\bex\label{Example2} \upshape
Consider the tensor $\mathcal{A}\in \mathrm{S}^3(\mathrm{R}^{11})$ given as:
\begin{equation}\label{Exa2}
\mathcal{A} = \sum^6_{k=1}
(u^k)^{\otimes 3},
\end{equation}
where $u^k$ is randomly generated:
\begin{align*}
  ( u^1 ,  u^2 , & u^3 ,  u^4 ,  u^5 ,  u^6)\\
=& \left(
  \begin{array}{cccccc}
    1.6264  &  1.5915    &     0  & -0.3974  &  1.6823   &      0 \\
         0   & 1.0873  & -0.2827  & -0.1396  &  0.4450   &      0 \\
    1.1821  &  0.9580 &   0.5106 &   0.0372  &       0   & 1.3210 \\
    0.5190  & -0.0254  & -0.4710  &       0  &  0.5333  & -0.4526 \\
    0.1157  &       0  & -0.4448 &  -0.0253   & 1.2527  &  1.0004 \\
   -0.2380  & -0.1910  &  1.4544  &       0   & 1.1283  &  0.4371 \\
   -0.1857  &  0.7614   &-0.1338  &       0   & 0.1025  &       0 \\
    0.8829  &  0.6716  &       0  & -0.3942  &       0  &  0.0551 \\
         0  &  1.6044  &       0  &  0.2774  &       0  &  0.6739 \\
         0  &  1.2222  &  0.2452  &  0.2778 &  -0.3661   &      0 \\
    3.0903   & 0.4538  & -0.4791  &  1.2668 &   0.5206   &      0 \\
  \end{array}
\right).
\end{align*}

We apply Algorithm \ref{Algorithm} and choose $d=4$ and $k=2$ in Step 0.
It terminates at Step 1 with $k=2$, as (\ref{SDPR}) is infeasible.
So, $\mathcal{A}$ is not completely positive.

\eex

\bex\label{Example2inf} \upshape
Consider the tensor $\mathcal{A}\in \mathrm{S}^5(\mathrm{R}^{8})$ given as:
\begin{equation}\label{Exa2inf}
\mathcal{A} = \sum^6_{k=1}
(u^k)^{\otimes 5},
\end{equation}
where $u^k$ is randomly generated:
\begin{align*}
  ( u^1 ,  u^2 , & u^3 ,  u^4 ,  u^5 ,  u^6)\\
=&   \left(
  \begin{array}{cccccc}
   -0.2758  &  1.0310  &  0.6933 &   1.3952 &   0.8384  & -0.5601 \\
    0.1657 &  -0.1714  &  0.3189 &  -0.4789 &  -0.6165  &  1.4245 \\
   -1.7971 &  -1.2471  & -0.0829  & -0.7241  & -0.7012  & -1.3621 \\
    0.1326 &  -1.0884  & -1.6137 &  -0.0201  & -1.0025  &  0.0263 \\
    0.0529   & 0.2557  & -0.5710 &   1.8326  & -0.8654  & -0.6023 \\
    0.2596  &  0.0034  &  0.5346  &  1.0681  & -2.0257  & -0.0331 \\
   -0.8960  & -0.8216  &  0.0039  & -1.1966 &  -0.7576 &  -0.2351 \\
   -1.5145 &  -0.2392  & -0.4547  &  0.3964  &  0.6182 &  -0.8991 \\
  \end{array}
\right).
\end{align*}

We apply Algorithm \ref{Algorithm}  and choose $d=6$ and $k=3$ in Step 0.
 It terminates at Step 1 with $k=3$, as (\ref{SDPR}) is infeasible.
So, $\mathcal{A}$ is not completely positive.

\eex

\bex\label{ExampleQi11} \upshape
Consider the tensor $\mathcal{A}\in \mathrm{S}^3(\mathrm{R}^{10})$ given as (cf. \cite{Qi}):
$$
\begin{array}{l}
  \mathcal{A}_{2,2,2}=4,\mathcal{A}_{2,2,3}=1,\mathcal{A}_{2,2,4}=1,\mathcal{A}_{2,2,5}=1,\mathcal{A}_{2,2,8}=1,\mathcal{A}_{2,3,3}=1,\\
  \mathcal{A}_{2,3,8}=1,\mathcal{A}_{2,4,4}=1,\mathcal{A}_{2,4,5}=1,\mathcal{A}_{2,5,5}=1,\mathcal{A}_{2,8,8}=1,\mathcal{A}_{3,3,3}=6,\\
  \mathcal{A}_{3,3,4}=1,\mathcal{A}_{3,3,5}=1,\mathcal{A}_{3,3,7}=1,\mathcal{A}_{3,3,8}=2,\mathcal{A}_{3,4,4}=1,\mathcal{A}_{3,4,5}=1,\\
  \mathcal{A}_{3,5,5}=1,\mathcal{A}_{3,7,7}=1,\mathcal{A}_{3,7,8}=1,\mathcal{A}_{3,8,8}=2,\mathcal{A}_{4,4,4}=7,\mathcal{A}_{4,4,5}=2,\\
  \mathcal{A}_{4,4,7}=1,\mathcal{A}_{4,4,9}=1,\mathcal{A}_{4,4,10}=1,\mathcal{A}_{4,5,5}=2,\mathcal{A}_{4,7,7}=1,\mathcal{A}_{4,7,9}=1, \\
  \mathcal{A}_{4,9,9}=1,\mathcal{A}_{4,10,10}=1,\mathcal{A}_{5,5,5}=4,\mathcal{A}_{7,7,7}=4,\mathcal{A}_{7,7,8}=1,\mathcal{A}_{7,7,9}=1, \\
  \mathcal{A}_{7,8,8}=1,\mathcal{A}_{7,9,9}=1,\mathcal{A}_{8,8,8}=6,\mathcal{A}_{8,8,9}=1,\mathcal{A}_{8,8,10}=1,\mathcal{A}_{8,9,9}=1, \\\mathcal{A}_{8,9,10}=1,
  \mathcal{A}_{8,10,10}=1,\mathcal{A}_{9,9,9}=4,\mathcal{A}_{9,9,10}=1,\mathcal{A}_{9,10,10}=1,\mathcal{A}_{10,10,10}=3,
\end{array}
$$
and the other entries are zero, except permutations of the above indices.
$\mathcal{A}$ is a strongly symmetric hierarchically dominated nonnegative tensor,
so it is completely positive \cite{Qi}.
A nonnegative decomposition of length 15 of $\mathcal{A}$ is also given in \cite{Qi}.

We apply Algorithm \ref{Algorithm} to verify that $\mathcal{A}$ is completely positive.
We choose $d=4$ and $k=2$ in Step 0. Algorithm \ref{Algorithm} terminates at Step 3 with $k=3$, and gives the following nonnegative decomposition
\[
\mathcal{A} =\sum_{i=1}^{14} \rho_i (v^i)^{\otimes 3},
\]
where $\rho_i$ and $v^i$ are:
{\footnotesize
\begin{align*}
 & \rho_1=2.0000,\ v^1=(0.0000,
    0.0000,
    0.0000,
    0.0000,
    1.0000,
    0.0000,
    0.0000,
    0.0000,
    0.0000,
    0.0000)^T, \\
 &\rho_2=2.6450,\ v^2=(0.0000,
    0.0000,
    0.0000,
    0.4169,
    0.0000,
    0.0000,
    0.0000,
    0.0000,
    0.0000,
    0.9090)^T,  \\
 &\rho_3=3.9427,\ v^3=(0.0000,
    0.0000,
    0.0000,
    0.9885,
    0.0000,
    0.0000,
    0.0000,
    0.0000,
    0.0000,
    0.1511)^T,  \\
 &\rho_4=2.0000,\ v^4=(0.0000,
    1.0000,
    0.0000,
    0.0000,
    0.0000,
    0.0000,
    0.0000,
    0.0000,
    0.0000,
    0.0000)^T,  \\
 &\rho_5=5.1962,\ v^5=(0.0000,
    0.5774,
    0.0000,
    0.5774,
    0.5774,
    0.0000,
    0.0000,
    0.0000,
    0.0000,
    0.0000)^T, \\
 &\rho_6=5.1962,\ v^6=(0.0000,
    0.0000,
    0.5774,
    0.5774,
    0.5774,
    0.0000,
    0.0000,
    0.0000,
    0.0000,
    0.0000)^T, \\
&\rho_7=2.0000,\ v^7=(0.0000,
    0.0000,
    0.0000,
    0.0000,
    0.0000,
    0.0000,
    1.0000,
    0.0000,
    0.0000,
    0.0000)^T, \\
 &\rho_8=3.0000,\ v^8=(0.0000,
    0.0000,
    1.0000,
    0.0000,
    0.0000,
    0.0000,
    0.0000,
    0.0000,
    0.0000,
    0.0000)^T, \\
 &\rho_9=3.0000,\ v^9=(0.0000,
    0.0000,
    0.0000,
    0.0000,
    0.0000,
    0.0000,
    0.0000,
    1.0000,
    0.0000,
    0.0000)^T, \\
 &\rho_{10}=2.0000,\ v^{10}=(0.0000,
    0.0000,
    0.0000,
    0.0000,
    0.0000,
    0.0000,
    0.0000,
    0.0000,
    1.0000,
    0.0000)^T, \\
 &\rho_{11}=5.1962,\ v^{11}=(0.0000,
    0.0000,
    0.0000,
    0.5773,
    0.0000,
    0.0000,
    0.5773,
    0.0000,
    0.5774,
    0.0000)^T, \\
 &\rho_{12}=5.1962,\ v^{12}=(0.0000,
    0.0000,
    0.0000,
    0.0000,
    0.0000,
    0.0000,
    0.0000,
    0.5773,
    0.5774,
    0.5773)^T, \\
 &\rho_{13}=5.1962,\ v^{13}=(0.0000,
    0.5774,
    0.5774,
    0.0000,
    0.0000,
    0.0000,
    0.0000,
    0.5774,
    0.0000,
    0.0000)^T, \\
 &\rho_{14}=5.1961,\ v^{14}=(0.0000,
    0.0000,
    0.5774,
    0.0000,
    0.0000,
    0.0000,
    0.5774,
    0.5774,
    0.0000,
    0.0000)^T.
\end{align*}
}

We get a shorter nonnegative decomposition of $\mathcal{A}$ than that given in \cite{Qi}. This shows an advantage of Algorithm \ref{Algorithm}.

\eex

\bex\label{ExampleQi2} \upshape
Consider the tensor $\mathcal{A}\in \mathrm{S}^4(\mathrm{R}^{10})$ given as (cf. \cite{Qi}):
$$
\begin{array}{l}
  \mathcal{A}_{1,1,1,1}=1,\mathcal{A}_{1,1,1,10}=1,\mathcal{A}_{1,1,10,10}=1,\mathcal{A}_{1,10,10,10}=1,\mathcal{A}_{2,2,2,2}=6, \mathcal{A}_{2,2,2,4}=2,\\
\mathcal{A}_{2,2,2,8}=2,\mathcal{A}_{2,2,2,9}=2,\mathcal{A}_{2,2,4,4}=2,\mathcal{A}_{2,2,4,8}=1, \mathcal{A}_{2,2,4,9}=1,\mathcal{A}_{2,2,8,8}=2,\\
\mathcal{A}_{2,2,8,9}=1,\mathcal{A}_{2,2,9,9}=2,\mathcal{A}_{2,4,4,4}=2, \mathcal{A}_{2,4,4,8}=1,\mathcal{A}_{2,4,4,9}=1,\mathcal{A}_{2,4,8,8}=1,\\
\mathcal{A}_{2,4,8,9}=1,\mathcal{A}_{2,4,9,9}=1, \mathcal{A}_{2,8,8,8}=2,\mathcal{A}_{2,8,8,9}=1,\mathcal{A}_{2,8,9,9}=1,\mathcal{A}_{2,9,9,9}=2,\\
\mathcal{A}_{4,4,4,4}=6, \mathcal{A}_{4,4,4,8}=2,\mathcal{A}_{4,4,4,9}=2,\mathcal{A}_{4,4,8,8}=2,\mathcal{A}_{4,4,8,9}=1,\mathcal{A}_{4,4,9,9}=2, \\
\mathcal{A}_{4,8,8,8}=2,\mathcal{A}_{4,8,8,9}=1,\mathcal{A}_{4,8,9,9}=1,\mathcal{A}_{4,9,9,9}=2,\mathcal{A}_{5,5,5,5}=2,\mathcal{A}_{5,5,5,7}=1, \\
\mathcal{A}_{5,5,5,9}=1,\mathcal{A}_{5,5,7,7}=1,\mathcal{A}_{5,5,7,9}=1,\mathcal{A}_{5,5,9,9}=1, \mathcal{A}_{5,7,7,7}=1,\mathcal{A}_{5,7,7,9}=1,\\
\mathcal{A}_{5,7,9,9}=1,\mathcal{A}_{5,9,9,9}=1,\mathcal{A}_{6,6,6,6}=3, \mathcal{A}_{6,6,6,7}=1,\mathcal{A}_{6,6,6,9}=1,\mathcal{A}_{6,6,6,10}=1,\\
\mathcal{A}_{6,6,7,7}=1,\mathcal{A}_{6,6,7,9}=1,\mathcal{A}_{6,6,9,9}=1,\mathcal{A}_{6,6,10,10}=1,\mathcal{A}_{6,7,7,7}=1,\mathcal{A}_{6,7,7,9}=1,\\
\mathcal{A}_{6,7,9,9}=1,\mathcal{A}_{6,9,9,9}=1,\mathcal{A}_{6,10,10,10}=1,\mathcal{A}_{7,7,7,7}=4,\mathcal{A}_{7,7,7,9}=2,\mathcal{A}_{7,7,9,9}=2,\\
\mathcal{A}_{7,9,9,9}=2,\mathcal{A}_{8,8,8,8}=8,\mathcal{A}_{8,8,8,9}=3,\mathcal{A}_{8,8,8,10}=1,\mathcal{A}_{8,8,9,9}=3,\mathcal{A}_{8,8,9,10}=1,\\
\mathcal{A}_{8,8,10,10}=1, \mathcal{A}_{8,9,9,9}=3,\mathcal{A}_{8,9,9,10}=1,\mathcal{A}_{8,9,10,10}=1,\mathcal{A}_{8,10,10,10}=1,\mathcal{A}_{9,9,9,9}=12, \\ \mathcal{A}_{9,9,9,10}=1,\mathcal{A}_{9,9,10,10}=1,\mathcal{A}_{9,10,10,10}=1,\mathcal{A}_{10,10,10,10}=4,\\
\end{array}
$$
and the other entries are zero, except permutations of the above indices.
$\mathcal{A}$ is a strongly symmetric hierarchically dominated nonnegative tensor,
so it is completely positive \cite{Qi}.
A nonnegative decomposition of length 20 of $\mathcal{A}$ is also given in \cite{Qi}.

We apply Algorithm \ref{Algorithm} and choose $d=6$ and $k=3$ in Step 0.
It terminates at Step 3 with $k=3$,
and gives the following nonnegative decomposition
\[
\mathcal{A} =\sum_{i=1}^{20} \rho_i (v^i)^{\otimes 4},
\]
where $\rho_i$ and $v^i$ are:
{\footnotesize
\begin{align*}
 & \rho_1=1.0000,\ v^1=(0.0000,
    0.0000,
    0.0000,
    0.0000,
    0.0000,
    1.0000,
    0.0000,
    0.0000,
    0.0000,
    0.0000)^T, \\
 &\rho_2=2.0000,\ v^2=(0.0000,
    0.0000,
    0.0000,
    1.0000,
    0.0000,
    0.0000,
    0.0000,
    0.0000,
    0.0000,
    0.0000)^T,  \\
 &\rho_3=2.0000,\ v^3=(0.0000,
    0.0000,
    0.0000,
    0.0000,
    0.0000,
    0.0000,
    1.0000,
    0.0000,
    0.0000,
    0.0000)^T,  \\
 &\rho_4=2.0000,\ v^4=(0.0000,
    1.0000,
    0.0000,
    0.0000,
    0.0000,
    0.0000,
    0.0000,
    0.0000,
    0.0000,
    0.0000)^T,  \\
 &\rho_5=1.0000,\ v^5=(0.0000,
    0.0000,
    0.0000,
    0.0000,
    1.0000,
    0.0000,
    0.0000,
    0.0000,
    0.0000,
    0.0000)^T, \\
 &\rho_6=4.0000,\ v^6=(0.0000,
    0.7071,
    0.0000,
    0.7071,
    0.0000,
    0.0000,
    0.0000,
    0.0000,
    0.0000,
    0.0000)^T, \\
&\rho_7=4.0000,\ v^7=(0.0000,
    0.0000,
    0.0000,
    0.0000,
    0.0000,
    0.7071,
    0.0000,
    0.0000,
    0.0000,
    0.7071)^T, \\
 &\rho_8=4.0000,\ v^8=(0.0000,
    0.0000,
    0.0000,
    0.7071,
    0.0000,
    0.0000,
    0.0000,
    0.7071,
    0.0000,
    0.0000)^T, \\
 &\rho_9=1.0000,\ v^9=(0.0000,
    0.0000,
    0.0000,
    0.0000,
    0.0000,
    0.0000,
    0.0000,
    0.0000,
    0.0000,
    1.0000)^T, \\
 &\rho_{10}=3.0000,\ v^{10}=(0.0000,
    0.0000,
    0.0000,
    0.0000,
    0.0000,
    0.0000,
    0.0000,
    1.0000,
    0.0000,
    0.0000)^T, \\
 &\rho_{11}=4.0000,\ v^{11}=(0.7071,
    0.0000,
    0.0000,
    0.0000,
    0.0000,
    0.0000,
    0.0000,
    0.0000,
    0.0000,
    0.7071)^T,  \\
 &\rho_{12}=4.0000,\ v^{12}=(0.0000,
    0.0000,
    0.0000,
    0.7071,
    0.0000,
    0.0000,
    0.0000,
    0.0000,
    0.7071,
    0.0000)^T, \\
 &\rho_{13}=4.0000,\ v^{13}=(0.0000,
    0.7071,
    0.0000,
    0.0000,
    0.0000,
    0.0000,
    0.0000,
    0.7071,
    0.0000,
    0.0000)^T, \\
&\rho_{14}=9.0000,\ v^{14}=(0.0000,
    0.0000,
    0.0000,
    0.0000,
    0.0000,
    0.5774,
    0.5774,
    0.0000,
    0.5774,
    0.0000)^T, \\
 &\rho_{15}=5.0000,\ v^{15}=(0.0000,
    0.0000,
    0.0000,
    0.0000,
    0.0000,
    0.0000,
    0.0000,
    0.0000,
    1.0000,
    0.0000)^T, \\
 &\rho_{16}=4.0000,\ v^{16}=(0.0000,
    0.7071,
    0.0000,
    0.0000,
    0.0000,
    0.0000,
    0.0000,
    0.0000,
    0.7071,
    0.0000)^T, \\
 &\rho_{17}=9.0000,\ v^{17}=(0.0000,
    0.0000,
    0.0000,
    0.0000,
    0.5774,
    0.0000,
    0.5774,
    0.0000,
    0.5774,
    0.0000)^T, \\
 &\rho_{18}=4.0000,\ v^{18}=(0.0000,
    0.0000,
    0.0000,
    0.0000,
    0.0000,
    0.0000,
    0.0000,
    0.7071,
    0.7071,
    0.0000)^T, \\
 &\rho_{19}=16.0000,\ v^{19}=(0.0000
    0.5000,
    0.0000,
    0.5000,
    0.0000,
    0.0000,
    0.0000,
    0.5000,
    0.5000,
    0.0000)^T, \\
 &\rho_{20}=9.0000,\ v^{20}=(0.0000,
    0.0000,
    0.0000,
    0.0000,
    0.0000,
    0.0000,
    0.0000,
    0.5774,
    0.5774,
    0.5774)^T.
\end{align*}
}
In fact, the decomposition we get above is the same as that given in \cite{Qi}.
\eex

\bex\label{Exampler} \upshape
Consider the tensor $\mathcal{A}\in  \mathrm{S}^4(\mathrm{R}^{3})$ given as:
\begin{equation}\label{Exar}
\mathcal{A} = \sum^8_{k=1}
(u^k)^{\otimes 4},
\end{equation}
where $u^k$ is randomly generated:
\begin{align*}
  ( u^1 ,  &u^2 ,  u^3 ,  u^4 ,  u^5 ,  u^6 ,  u^7,  u^8 )\\
    =&    \left(
  \begin{array}{cccccccc}
    0.9863  &  0.8272  &  0.5859  &  0.0372  &  0.7544  &  0.1114  &  0.8892  &  0.2140 \\
    0.0520  &  0.2332  &  0.6013  &  0.4021 &   0.4401  &  0.0895   & 0.1871  &  0.1018 \\
    0.9367   & 0.6805  &  0.2352   & 0.8218  &  0.2196  &  0.1421   & 0.5704  &  0.1826 \\
  \end{array}
\right).
\end{align*}

Obviously, $\mathcal{A}$ is completely positive.
We apply Algorithm \ref{Algorithm} to verify this fact.
We choose $d=6$ and $k=3$ in Step 0. Algorithm \ref{Algorithm} terminates at Step 3 with $k=4$, and gives the nonnegative decomposition
\[
\mathcal{A} = \sum_{i=1}^{6}  \rho_i (v^i)^{\otimes 3},
\]
where $\rho_i$ and $v^i$ are:
{\small
 \begin{align*}
 & \rho_1=0.5821,\ v^1=(0.6929,
    0.6921,
    0.2024)^T, \\
 &\rho_2=1.5228,\ v^2=(0.8402,
    0.3689,
    0.3975)^T,  \\
 &\rho_3=5.1374,\ v^3=(0.7443,
    0.0702,
    0.6642)^T,  \\
 &\rho_4=0.1409,\ v^4=(0.5335,
    0.1934,
    0.8234)^T,  \\
 &\rho_5=0.1879,\ v^5=(0.3474,
    0.5526,
    0.7576)^T, \\
 &\rho_6=0.5794,\ v^6=(0.0000,
    0.4263,
    0.9046)^T.
\end{align*}
}

Algorithm \ref{Algorithm} gives a shorter nonnegative decomposition than that given in \eqref{Exar}.

\eex

\bex\label{Exampler54} \upshape
Consider the tensor $\mathcal{A}\in  \mathrm{S}^5(\mathrm{R}^{4})$ given as:
\begin{equation}\label{Exar54}
\mathcal{A} = \sum^{12}_{k=1}
(u^k)^{\otimes 5},
\end{equation}
where $u^k$ is randomly generated:
{\small
\begin{align*}
 & (u^1,  u^2,  u^3,  u^4,  u^5,  u^6)
= \left(
  \begin{array}{cccccc}
        0 &   0.9323   &      0  &  0.5363    &     0    &       0 \\
        0  &  2.4751  &  0.9494  &       0   &      0    &       0 \\
        0  &       0  &  1.7464   & 0.4275   &      0    &  0.0212 \\
   0.0706   & 1.0381  &  1.0540    &     0   & 0.1086     &      0 \\
  \end{array}
\right), \\
& (u^7,  u^8,  u^9,  u^{10},  u^{11},  u^{12})
=  \left(
  \begin{array}{cccccc}
               0   & 0.6436   & 1.2125  &  0.2525  &     0  &  0.5494\\
          0.6237   & 1.4777    &     0  &       0 & 0.4431   & 0.9135\\
          1.1552   &      0     &    0  &  1.1050  &    0    &     0\\
               0   &      0      &   0   & 0.3298    &  0   &      0\\
  \end{array}
\right).
\end{align*}
}

Clearly, $\mathcal{A}$ is completely positive. We apply Algorithm \ref{Algorithm} and choose $d=6$ and $k=3$ in Step 0. It terminates at Step 3  with $k=3$,  and gives the nonnegative decomposition
\[
\mathcal{A} = \sum_{i=1}^{8}  \rho_i (v^i)^{\otimes 5},
\]
where $\rho_i$ and $v^i$ are:
{\small
 \begin{align*}
 & \rho_1=7.7107,\ v^1=(0.3820,
    0.9242,
    0.0000,
    0.0000)^T, \\
 &\rho_2=4.5500,\ v^2=(0.4619,
    0.8869,
    0.0000,
    0.0000)^T,  \\
 &\rho_3=2.6233,\ v^3=(1.0000,
    0.0004,
    0.0000,
    0.0000)^T,  \\
 &\rho_4=185.1917,\ v^4=(0.3281,
    0.8711,
    0.0000,
    0.3653)^T,  \\
 &\rho_5=3.9090,\ v^5=(0.0000,
    0.4751,
    0.8799,
    0.0000)^T, \\
 &\rho_6=0.1508,\ v^6=(0.7819,
    0.0000,
    0.6233,
    0.0000)^T, \\
&\rho_7=57.6559,\ v^7=(0.0000,
    0.4220,
    0.7761,
    0.4685)^T, \\
 &\rho_8=2.2945,\ v^8=(0.2139,
    0.0000,
    0.9361,
    0.2794).
\end{align*}
}

The length of the nonnegative decomposition above is shorter than that of \eqref{Exar54}.

\eex

\bex\label{Example6} \upshape
Consider the tensor $\mathcal{A}\in  \mathrm{S}^5(\mathrm{R}^{8})$ given as:
\begin{equation}\label{Exa6}
\mathcal{A} = \sum^5_{k=1}
(u^k)^{\otimes 5},
\end{equation}
where $u^k$ is randomly generated:
\[
(   u^1 ,  u^2 ,  u^3 ,  u^4 ,  u^5)=
 \left(
  \begin{array}{ccccc}
    0.7022 &   0.1176  &  0.3839  &  0.7534  &  0.3798 \\
    0.4504  &  0.8497 &   0.8385 &   0.0358 &   0.3089\\
    0.4799   & 0.2428  &  0.1933 &   0.9760  &  0.8814\\
    0.9176  &  0.6226  &  0.3218 &   0.9913  &  0.8675\\
    0.0614  &  0.9455  &  0.8484  &  0.5675  &  0.3120\\
    0.7387 &   0.1804  &  0.2369   & 0.5012 &   0.7118\\
    0.1235   & 0.8027  &  0.0264  &  0.4344 &   0.2808\\
    0.5151  &  0.2090  &  0.7212  &  0.4743  &  0.9899\\
  \end{array}
\right).
\]

We apply Algorithm \ref{Algorithm} and choose $d=6$ and $k=3$ in Step 0.
It terminates at Step 3 with $k=3$,
and gives the nonnegative decomposition
\[
\mathcal{A} =\sum_{i=1}^{5} \rho_i (v^i)^{\otimes 5},
\]
where $\rho_i$ and $v^i$ are:
{\small
 \begin{align*}
 & \rho_1=21.8169,\ v^1=(0.2050
    0.1667
    0.4758
    0.4683
    0.1684
    0.3842
    0.1516
    0.5343)^T, \\
 &\rho_2=10.8812,\ v^2=(0.4356
    0.2794
    0.2977
    0.5693
    0.0381
    0.4583
    0.0766
    0.3196)^T,  \\
 &\rho_3=22.7706,\ v^3=(0.4032
    0.0192
    0.5224
    0.5305
    0.3037
    0.2682
    0.2325
    0.2539)^T,  \\
 &\rho_4=7.9195,\ v^4=(0.2538
    0.5544
    0.1278
    0.2127
    0.5608
    0.1566
    0.0175
    0.4768)^T,  \\
 &\rho_5=13.0825,\ v^5=(0.0703
    0.5081
    0.1452
    0.3723
    0.5654
    0.1079
    0.4799
    0.1250)^T.
\end{align*}
}

In fact, the nonnegative decomposition given above is the same as \eqref{Exa6}.
\eex

\bex\label{Example7} \upshape
Consider the tensor $\mathcal{A}\in  \mathrm{S}^4(\mathrm{R}^{10})$ given as:
\begin{equation}\label{Exa7}
\mathcal{A} = \sum^5_{k=1}
(u^k)^{\otimes 4},
\end{equation}
where $u^k$ is randomly generated:
\[
(    u^1 ,  u^2 ,  u^3 ,  u^4 ,  u^5)=
 \left(
  \begin{array}{ccccc}
         0  &  0.4764   &      0   &      0  &  0.2689 \\
    0.4538   & 2.1852   & 0.0659  &       0   & 0.2750\\
         0  &  0.6909   & 0.3847  &  0.8428   & 0.3589\\
    0.5668  &       0   & 0.8943 &   0.4031   &      0\\
    0.6059  &       0   & 0.7172 &        0   &      0\\
         0 &   0.6203   &      0 &   1.0202  &  0.2052\\
    0.1415 &   0.7685  &  0.5979 &   0.4848   &      0\\
         0 &   1.3676   & 0.0973 &   0.3973  &       0\\
    0.2291 &   0.5564 &        0 &   0.2032   &      0\\
         0  &       0    &     0  &  0.9733   &      0\\
  \end{array}
\right).
\]

We apply Algorithm \ref{Algorithm} and choose $d=6$ and $k=3$ in Step 0.
It terminates at Step 3 with $k=3$,
and gives the nonnegative decomposition
\[
\mathcal{A} =\sum_{i=1}^{5} \rho_i (v^i)^{\otimes 4},
\]
where $\rho_i$ and $v^i$ are:
{\footnotesize
 \begin{align*}
 & \rho_1=0.1017,\ v^1=(0.4762,
    0.4870,
    0.6356,
    0.0000,
    0.0000,
    0.3634,
    0.0000,
    0.0000,
    0.0000,
    0.0000)^T, \\
 &\rho_2=74.5574,\ v^2=(0.1621,
    0.7437,
    0.2351,
    0.0000,
    0.0000,
    0.2111,
    0.2615,
    0.4654,
    0.1893,
    0.0000)^T,  \\
 &\rho_3=0.9347,\ v^3=(0.0000,
    0.4616,
    0.0000,
    0.5764,
    0.6162,
    0.0000,
    0.1439,
    0.0000,
    0.2330,
    0.0000)^T,  \\
 &\rho_4=3.3617,\ v^4=(0.0000,
    0.0487,
    0.2841,
    0.6605,
    0.5296,
    0.0000,
    0.4416,
    0.0719,
    0.0000,
    0.0000)^T,  \\
 &\rho_5=10.8577,\ v^5=(0.0000,
    0.0000,
    0.4643,
    0.2221,
    0.0000,
    0.5620,
    0.2671,
    0.2189,
    0.1120,
    0.5362)^T.
\end{align*}
}

The nonnegative decomposition given above is the same as \eqref{Exa7}.
\eex

\section{Conclusions and discussions}
We consider the completely positive tensor and decomposition problem.
We formulate it as an $E$-truncated $K$-moment problem
and present a semidefinite algorithm (Algorithm \ref{Algorithm}) for it.
If the tensor is not completely positive, Algorithm \ref{Algorithm} can give a certificate for it.
If it is completely positive, Algorithm \ref{Algorithm} can give a nonnegative decomposition of it, without assuming the length of the decomposition is known.
Numerical experiments show that Algorithm \ref{Algorithm} is efficient in solving the general completely positive tensor decomposition problem.

For a tensor that is not completely positive,
an interesting problem is how to find a best fit of it.
This will be our future work.

\end{document}